\newtheorem{definition}{\bf Definition}
\newtheorem{lemma}{\bf Lemma}
\newtheorem{theorem}{\bf Theorem}
\newtheorem{remark}{Remark}
\newtheorem{corollary}{\bf Corollary}
\newcommand{\N}{{\mathbb N}}
\begin{document}

\title[Feet in Buekenhout-Metz Unitals]{Feet in Orthogonal-Buekenhout-Metz Unitals}
\author{N. Abarz\'ua}
\address{Departamento de Matem\'aticas, Universidad Adolfo Iba\~nez}
\email{nicolas.abarzua@uai.cl}
\author{R. Pomareda}
\address{Departamento de Matem\'aticas, Universidad de Chile.}
\email{rpomared@uchile.cl}
\author{O. Vega}
\address{Department of Mathematics, California State University, Fresno.}
\email{ovega@csufresno.edu}
\subjclass{Primary 05, 51; Secondary 20}
\keywords{Unitals, projective planes.}
\thanks{The authors would like to thank \emph{Universidad de Chile}, and its \emph{Stimulus Program for Institutional Excellence} for supporting the third author's visit to Universidad de Chile, where a part of this work was done.  Also, the second author was funded by Fondecyt project \# 1140510.}

\begin{abstract}
Given an Orthogonal-Buekenhout-Metz unital $U_{\alpha,\beta}$, embedded in $PG(2,q^2)$, and a point $P\notin U_{\alpha,\beta}$, we study the set of feet, $\tau_{P}(U_{\alpha,\beta})$, of $P$ in $U_{\alpha,\beta}$. We characterize geometrically each of these sets as either $q+1$ collinear points or as $q+1$ points partitioned into two arcs. Other results about the geometry of these sets are also given.
\end{abstract}

\maketitle

\section{Preliminares}

Most definitions and theorems in this section may be found in \cite{D68}. We direct the reader to this source for more information, and details, about projective planes.

Let $GF(q^{2})$ be the field with $q^{2}$ elements, where $q=p^{n}$ with $p$ prime (we will always consider $p$ to be odd in this article), and $n\in \N$. Throughout this article we will use
\[
GF(q^2) = \{ a+\epsilon b; \ a,b\in GF(q), \ \text{and} \ \epsilon^2=w\in GF(q) \},
\]
and we will write $\overline{x}=x^q$, $T(x) = x+ \overline{x}$ and $N(x)=x\overline{x}$, for all $x\in GF(q^2)$.

We let $V$ be a $3$-dimensional vector space over $GF(q^{2})$ and we consider the projective plane $\Pi = PG(2,q^{2})$, defined by letting its points to be  the $1$-dimensional subspaces of $V$ and its lines be the $2$-dimensional subspaces of $V$.  A point $P$ of $\Pi$ will be denoted by
\[
P=[a,b,c],
\]
where $(a,b,c)$ is a vector generating the subspace defining $P$.  If $l$ is a line of $\Pi$ then it will be denoted by
\[
l=\left[\begin{array}{c}
x\\
y\\
z\\
\end{array}\right]=[x,y,z]^{t},
\]
where $(x,y,z)$ is a vector that is orthogonal (using the standard dot product) to the $2$-dimensional subspace defining $l$.  The incidence in $\Pi$ is given by natural set-theoretic containment. Thus,
\[
P\in l \ \ \Longleftrightarrow \ \ 
\left[\begin{array}{ccc}
a & b & c\\
\end{array}\right]
\left[\begin{array}{c}
x\\
y\\
z\\
\end{array}\right] 
= 0 \ \ \Longleftrightarrow \ \ ax+by+cz=0.
\]

It is known that $\Pi$ is a projective plane of order $q^2$. Hence, the following properties hold in $\Pi$:
\begin{enumerate}
\item Every line of $\Pi$ contains exactly $q^{2}+1$ points.
\item Every point of $\Pi$ is on exactly  $q^{2}+1$ lines.
\item The number of points, and the number of lines, in $\Pi$ is $q^{4}+q^{2}+1$.
\end{enumerate}

In order to motivate the concept of unital, we define another important object.

\begin{definition}
A \emph{blocking set} $\beta$ is a subset of points of $\Pi$ such that every line of $\Pi$ contains at least one point in $\beta$.  A \emph{minimal blocking set} is a blocking set in which removing any of its points never yields a blocking set.
\end{definition}

\begin{remark}
The collection of points on a line $l$ of $\Pi$ is a blocking set.  We will say that a blocking set containing all the points on a line is a trivial blocking set. 
\end{remark}

A good summary of the basics on blocking sets may be found in Chapter 13 of  \cite{H79}. The following result may be found there.

\begin{theorem}\label{boundofblockingset}
Blocking sets exist in $\Pi$. Moreover, if $\beta$ is a minimal blocking set of $\Pi$ then $|\beta|\leq q^3 + 1$.
\end{theorem}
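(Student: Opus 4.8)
The plan is to prove the two assertions separately, the bound being the substantial one. Existence is immediate: since any two distinct lines of $\Pi$ meet in a point, the $q^{2}+1$ points on a single line already form a blocking set, and because $\Pi$ is finite any blocking set contains a minimal one (delete inessential points one at a time). For the bound, let $\beta$ be a minimal blocking set and put $b=|\beta|$; I may assume $b>q^{2}+1$, since otherwise $b\le q^{2}+1\le q^{3}+1$ and nothing is to be proved, and I note that a minimal blocking set is a proper subset of $\Pi$, so external points exist. The whole argument is a double count of \emph{tangents} --- lines meeting $\beta$ in exactly one point --- indexed by the points off $\beta$. First I would record that minimality forces tangents: if every line through some $P\in\beta$ met $\beta$ again, then $\beta\setminus\{P\}$ would still block every line, contradicting minimality; hence each point of $\beta$ carries a tangent, and since a tangent determines its unique point of $\beta$, the total number $\theta$ of tangents satisfies $\theta\ge b$.

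Next I would compute the first two moments of the function $f(R)=\#\{\text{tangents through }R\}$ over external points $R\notin\beta$. Each tangent has $q^{2}+1$ points, exactly one in $\beta$, hence $q^{2}$ points off $\beta$, so counting incidences gives the first moment
\[
\sum_{R\notin\beta} f(R)=q^{2}\,\theta .
\]
For the second moment comes the key observation: any two distinct tangents, tangent at $P_{1}\ne P_{2}$, meet in a unique point that cannot lie in $\beta$, for such a point would be the single $\beta$-point of each tangent and would force $P_{1}=P_{2}$. Thus every one of the $\binom{\theta}{2}$ pairs of tangents is accounted for at exactly one external point, giving
\[
\sum_{R\notin\beta}\binom{f(R)}{2}=\binom{\theta}{2}, \qquad\text{so}\qquad \sum_{R\notin\beta} f(R)^{2}=\theta^{2}+(q^{2}-1)\theta .
\]

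Finally I would feed these into Cauchy--Schwarz. With $E=q^{4}+q^{2}+1-b$ external points, $(\sum f(R))^{2}\le E\sum f(R)^{2}$ reads $q^{4}\theta^{2}\le E\big(\theta^{2}+(q^{2}-1)\theta\big)$; dividing by $\theta$ and using $q^{4}-E=b-q^{2}-1$ yields $(b-q^{2}-1)\,\theta\le (q^{4}+q^{2}+1-b)(q^{2}-1)$. Inserting $\theta\ge b$ and the identity $(q^{2}-1)(q^{4}+q^{2}+1)=q^{6}-1$, the inequality collapses to $(b-1)^{2}\le q^{6}$, i.e. $b\le q^{3}+1$, as claimed. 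The step I expect to be the crux is the second-moment identity: that \emph{every} pair of tangents meets off $\beta$ is what pins $\sum f(R)^{2}$ to a function of $\theta$ alone, and it is exactly this rigidity --- rather than any bound on individual secant sizes --- that forces the sharp constant; a naive line-by-line count yields only a weak $O(q^{4})$ bound. Establishing $\theta\ge b$ from minimality is the other essential ingredient, since it is what couples the two moments back to $b$.
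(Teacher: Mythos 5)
The paper does not actually prove this statement; it is quoted from Chapter 13 of Hirschfeld's book, so there is no internal proof to compare against. Your argument is the classical Bruen-style tangent count, and it is essentially correct: existence via a line, minimality forcing a tangent at each point of $\beta$ (hence $\theta\ge b$), the incidence count $\sum_R f(R)=q^2\theta$, and the Cauchy--Schwarz step leading to $(b-1)^2\le q^6$ all check out, including the algebra with $(q^2-1)(q^4+q^2+1)=q^6-1$. The one inaccuracy is your second-moment identity: it is not true that \emph{every} pair of distinct tangents meets at an external point. Two tangents sharing the same tangency point $P\in\beta$ meet precisely at $P$, which lies in $\beta$, so the correct statement is
\[
\sum_{R\notin\beta}\binom{f(R)}{2}=\binom{\theta}{2}-\sum_{P\in\beta}\binom{t_P}{2}\;\le\;\binom{\theta}{2},
\]
where $t_P$ is the number of tangents at $P$. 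Fortunately this error is harmless: Cauchy--Schwarz only requires an upper bound on $\sum_R f(R)^2$, and the inequality goes in the direction you need, so the chain $q^4\theta^2\le E\bigl(\theta^2+(q^2-1)\theta\bigr)$ and everything after it survives. You should also make explicit that $\theta>0$ (so the division is legitimate) and that $E>0$; you address the latter, and the former follows from $\theta\ge b\ge 1$. With the second-moment equality replaced by the inequality above, the proof is complete and gives the sharp bound $b\le q^3+1$.
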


A very special kind of a largest possible minimal blocking set of $\Pi$ is the object we want to focus our attention from now on.

\begin{definition}
A \emph{unital} in $\Pi$ is a set $U$ of $q^{3}+1$ points of $\Pi$ such that every line of $\Pi$ intersects $U$ in exactly $1$ or $q+1$ points. Lines of $\Pi$ will be called \emph{tangent} or \emph{secant} to $U$ depending on whether they intersect $U$ in $1$ or $q+1$ points, respectively.
\end{definition}

\begin{remark}
Unitals may be defined in a much more general way but in this article we will focus only on unitals embedded in $\Pi$. So, our definition has been written with this purpose in mind. We refer the reader to \cite{BE08} for a detailed exposition about unitals and for the concepts we use in this article that we may fail to explain in detail.
\end{remark}

Two standard examples of unitals are  
\begin{enumerate}
\item The set of absolute points of a non-degenerate unitary polarity of $\Pi$:
\[
H=\{[x,y,z]\in \Pi; \  N(x)+N(y)+N(z)=0\}
\]
is a unital in $\Pi$, called \emph{classical}.
\item Buekenhout \cite{B76} proved that, for $\alpha, \beta \in GF(q^2)$ such that $4N(\alpha)+(\overline{\beta}-\beta)^{2}$ is a non-square in $GF(q)$,  the set
\[
U_{\alpha,\beta}=\{[x,\alpha x^{2}+\beta N(x)+r,1]; \ x\in GF(q^{2}), \ r\in GF(q)\}\cup\{P_{\infty}  \}
\]
is a unital (said to be an \emph{orthogonal-Buekenhout-Metz unital}) in $\Pi$, where $P_{\infty} =[0,1,0]$. Moreover, $\alpha=0$ if and only if the unital $U_{\alpha,\beta}$ is classical, and $\beta=\overline{\beta}$ if  and only if the unital $U_{\alpha,\beta}$ is a union of conics (see \cite{BE90} or \cite{HS91}, and \cite{DS13}).
\end{enumerate}

From now on we focus our study on non-classical orthogonal-Buekenhout-Metz unitals $U_{\alpha, \beta}$. So, for the rest of this article we assume $\alpha \neq 0$.

Elementary counting shows that if $U_{\alpha, \beta}$ is an orthogonal-Buekenhout-Metz unital in $\Pi$ and $P\in U_{\alpha, \beta}$ then there is exactly one tangent line to $U_{\alpha, \beta}$ through $P$ and there are exactly $q^{2}$ secant lines to $U_{\alpha, \beta}$ through $P$. Similarly, if $P\notin U_{\alpha, \beta}$ then there are exactly $q+1$ lines tangent to $U_{\alpha, \beta}$ through $P$  and there are exactly $q^{2}-q$ secant lines to $U_{\alpha, \beta}$ through $P$.

\begin{definition}
Let $U_{\alpha, \beta}$ be an orthogonal-Buekenhout-Metz unital and $P$ a point not in $U_{\alpha, \beta}$. Each of the $q+1$ points of $U_{\alpha, \beta}$ that are on a tangent line to $U_{\alpha, \beta}$ through $P$ is said to be a foot of $P$. We will denote the set of feet of $P$ by $\tau_{P}(U_{\alpha, \beta})$ and we will call it the \emph{pedal} of $P$.
\end{definition}

It is known that $\tau_{P}(U_{\alpha, \beta})$ has the following properties:
\begin{enumerate}
\item $\tau_{P}(U_{\alpha, \beta})$ is contained in a line of $\Pi$, for all $P\in \Pi\setminus U_{\alpha, \beta}$ if and only if $U_{\alpha, \beta}$ is classical (see Thas \cite{T92}). The conditions for this result have been relaxed after Thas's work, see \cite{AE02} for a more recent result on this characterization. 
\item For $U_{\alpha, \beta}$ non-classical. $\tau_{P}(U_{\alpha, \beta})$ is contained in a line of $\Pi$ if and only if $P\in \ell_{\infty}$. Note that $\ell_{\infty} \cap U_{\alpha, \beta} = P_{\infty}$ (see, e.g. \cite{BE08}). \\
Note that this result implies that every line, different from $\ell_{\infty}$, through $P_{\infty}$ contains a pedal.
\end{enumerate}

Finally, there is a group $G \leq P\Gamma L(3,q^{2})$ leaving $U_{\alpha,\beta}$ invariant and fixing $P_{\infty}$ such that 
\begin{enumerate}
\item $G$ is transitive on the set of points of $U_{\alpha,\beta} \setminus \ell_{\infty}$. 
\item $G$ is transitive on the points of $l_{\infty}\setminus \{P_{\infty}\}$.
\item $G$ has either one or two orbits on the points of $\Pi\setminus (U_{\alpha,\beta}\cup l_{\infty})$. Moreover, these orbits are those of $P_{\lambda}=[0,\lambda \epsilon,1]$, with $\lambda=1$ or $\lambda=w=\epsilon^{2}$.
\end{enumerate}

\section{Intersections of Lines and Pedals}

Our objective is to find geometric properties that can describe the pedals of points $P \notin  \ell_{\infty}$ in unitals $U_{\alpha,\beta}$, where $\alpha \neq 0$. In particular, we care about how lines of $\Pi$ intersect these sets. Not much is known about pedals in non-classical unitals, albeit the work by Kr{\v{c}}adinac and Smoljak  is pertinent; in \cite{KS11} they study all possible configurations for pedals in unitals that are embedded in (not-necessarily Desarguesian) projective planes of order $9$ and $16$.

Because of the results listed above about the group $G$ we will now only study $\tau_{P}(U_{\alpha,\beta})$ for $P = P_{\lambda}=[0,\lambda \epsilon,1]$, with $\lambda=1$ or $\lambda=w$; this decision is justified in the following lemma.

\begin{lemma}\label{lemorbits}
If $\sigma\in G$, $A,C\in \tau_{P}(U_{\alpha,\beta})$, $\sigma(A)=B$, $\sigma(C)=D$, $Q=\sigma(P)$, then $\sigma(\tau_{P}(U_{\alpha,\beta})) = \tau_{Q}(U_{\alpha,\beta})$ and
\[
| AC \cap \tau_{P}(U_{\alpha,\beta}) |  =  | BD \cap \tau_{Q}(U_{\alpha,\beta})|.
\]
\end{lemma}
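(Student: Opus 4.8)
The plan is to use nothing about $\sigma$ beyond the fact that it is a collineation of $\Pi$ that leaves $U := U_{\alpha,\beta}$ invariant. The first thing I would record is that any such $\sigma$ respects the tangent/secant dichotomy: for any line $\ell$ we have $|\sigma(\ell)\cap U| = |\sigma(\ell)\cap \sigma(U)| = |\ell\cap U|$ because $\sigma$ is a bijection with $\sigma(U)=U$, so $\ell$ is tangent (respectively secant) exactly when $\sigma(\ell)$ is. For the same reason $\sigma$ maps $\Pi\setminus U$ onto itself, so from $P\notin U$ we get $Q=\sigma(P)\notin U$; thus $\tau_Q(U)$ is defined and $|\tau_Q(U)| = q+1 = |\tau_P(U)|$.

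For the first assertion I would use the defining characterization that $X\in\tau_P(U)$ if and only if $X\in U$ and the line $PX$ is tangent to $U$ (necessarily at $X$). Given such an $X$, apply $\sigma$: since collineations preserve collinearity, $\sigma(PX)=Q\,\sigma(X)$, and since $\sigma$ is a bijection with $\sigma(U)=U$ we have $\sigma(PX)\cap U = \sigma(PX\cap U)=\{\sigma(X)\}$. Hence $Q\,\sigma(X)$ is tangent to $U$ at $\sigma(X)$, i.e. $\sigma(X)\in\tau_Q(U)$. This yields $\sigma(\tau_P(U))\subseteq \tau_Q(U)$, and because $\sigma$ is injective while both sets have exactly $q+1$ elements, equality follows.

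For the second assertion, note that $A,C$, being distinct feet of $P$, determine a line $AC$, and since $\sigma$ preserves collinearity with $B=\sigma(A)$ and $D=\sigma(C)$, the image of that line is $\sigma(AC)=BD$. Then, using that a bijection commutes with intersection together with the first assertion, $\sigma(AC\cap\tau_P(U)) = \sigma(AC)\cap\sigma(\tau_P(U)) = BD\cap\tau_Q(U)$. Taking cardinalities and invoking the injectivity of $\sigma$ gives $|AC\cap\tau_P(U)| = |BD\cap\tau_Q(U)|$, as claimed.

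I do not anticipate a genuine obstacle: the whole argument rests on the single structural fact that a collineation fixing $U$ setwise permutes tangent lines among themselves and commutes with the set operations involved. The only points needing care are confirming that $Q\notin U$ so the right-hand pedal exists, and observing that $A\neq C$ (equivalently $B\neq D$, since $\sigma$ is injective) so that the lines $AC$ and $BD$ are meaningful; both are immediate. This lemma is precisely what lets us restrict attention to the representatives $P_\lambda$, since $G$ is transitive on $U\setminus\ell_\infty$ and on $\ell_\infty\setminus\{P_\infty\}$ and has at most two orbits on the remaining points.
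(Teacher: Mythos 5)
Your proof is correct and follows essentially the same route as the paper: both arguments rest on the observation that $\sigma$ preserves intersection numbers of lines with $U_{\alpha,\beta}$, hence maps tangent lines to tangent lines and pedals to pedals. The only cosmetic difference is that you obtain the cardinality equality in one step from $\sigma(AC\cap\tau_P(U_{\alpha,\beta}))=BD\cap\tau_Q(U_{\alpha,\beta})$, while the paper derives the two inequalities separately using $\sigma$ and $\sigma^{-1}$.
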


\begin{proof}
It is easy to see that $\sigma$ preserves the number of points of intersection between lines and $U_{\alpha,\beta}$, and so $\sigma$ maps tangent lines into tangent lines. It follows that $\sigma(\tau_{P}(U_{\alpha,\beta})) = \tau_{Q}(U_{\alpha,\beta})$.

Note that $\sigma(AC)=BD$ and that $B,D \in \tau_{Q}(U_{\alpha,\beta})$. So, if we repeat this argument with $A$ and any other point $E \in AC \cap \tau_{P}(U_{\alpha,\beta})$ we would get another point in $BD \cap \tau_{Q}(U_{\alpha,\beta})$. Hence, since $\sigma$ is injective we get one direction of the desired inequality. We obtain the other direction by repeating the argument using $\sigma^{-1}$ instead of $\sigma$.
\end{proof}

We first look at the lines through $P_{\infty}$. It was mentioned earlier that there is a bijection between the set of pedals containing $P_{\infty}$ and the set of lines, different from $\ell_{\infty}$, through this point. We now want to look at how these $q^2$ lines intersect $\tau_{P_{\lambda}}(U_{\alpha,\beta})$. The following remark gives enough information for us to address this issue in the subsequent lemma. 

\begin{remark}
Two distinct pedals can intersect in at most one point, as for every point $A\in \tau_{P}(U_{\alpha,\beta})\cap \tau_{Q}(U_{\alpha,\beta})$ we always get that $A$, $P$ and $Q$ are collinear.  Moreover, two distinct pedals intersect if and only if they are the pedals of two points on a line tangent to $U_{\alpha,\beta}$; their intersection is the tangency point.
\end{remark}

The following lemma is immediate.

\begin{lemma}\label{lemlinesthroughPinfinity}
Let $\ell \neq \ell_{\infty}$ be a line such that $P_{\infty} \in \ell$. Then, $\ell$ is either tangent or exterior to all the pedals not contained in $\ell$.  
\end{lemma}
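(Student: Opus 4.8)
The plan is to lean entirely on the two facts recalled just before the statement: that every line through $P_{\infty}$ other than $\ell_{\infty}$ contains a pedal, and that two distinct pedals meet in at most one point. Fix such a line $\ell$, and let $\tau_{P}(U_{\alpha,\beta})$ be the pedal it contains, where $P\in\ell_{\infty}\setminus\{P_{\infty}\}$. (Such a $P$ is not in $U_{\alpha,\beta}$, since $\ell_{\infty}\cap U_{\alpha,\beta}=\{P_{\infty}\}$, so its pedal is indeed defined.)

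First I would pin down which points of $\ell$ actually constitute this pedal. Since feet are by definition points of $U_{\alpha,\beta}$, we have $\tau_{P}(U_{\alpha,\beta})\subseteq \ell\cap U_{\alpha,\beta}$, and the left-hand set already has $q+1$ elements. Because every line meets a unital in exactly $1$ or $q+1$ points, this forces $\ell$ to be secant and in fact
\[
\tau_{P}(U_{\alpha,\beta}) = \ell\cap U_{\alpha,\beta}.
\]
This identification is really the only step that needs an argument.

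With it in hand the conclusion is immediate. Let $\tau_{Q}(U_{\alpha,\beta})$ be any pedal not contained in $\ell$; in particular $\tau_{Q}(U_{\alpha,\beta})\neq\tau_{P}(U_{\alpha,\beta})$, since the latter lies on $\ell$. As every pedal consists of points of $U_{\alpha,\beta}$, we get
\[
\ell\cap\tau_{Q}(U_{\alpha,\beta}) \subseteq \ell\cap U_{\alpha,\beta} = \tau_{P}(U_{\alpha,\beta}),
\]
so $\ell\cap\tau_{Q}(U_{\alpha,\beta})\subseteq \tau_{P}(U_{\alpha,\beta})\cap\tau_{Q}(U_{\alpha,\beta})$. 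By the remark preceding the statement, two distinct pedals meet in at most one point, so this last set has at most one element. Hence $\ell$ meets $\tau_{Q}(U_{\alpha,\beta})$ in zero or one point, i.e. $\ell$ is exterior or tangent to it.

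I do not expect any serious obstacle here, which is consistent with the lemma being labelled \emph{immediate}. The one place to be careful is the identification $\tau_{P}(U_{\alpha,\beta})=\ell\cap U_{\alpha,\beta}$: one must combine the defining property of a unital (every line is tangent or secant) with the count $|\tau_{P}(U_{\alpha,\beta})|=q+1$ to rule out the tangent case and conclude that the pedal exhausts $\ell\cap U_{\alpha,\beta}$. Once that is settled, the rest is pure set containment together with the intersection remark, and both the ``tangent'' and ``exterior'' outcomes fall out of the single inequality above.
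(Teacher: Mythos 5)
Your proof is correct and follows exactly the route the paper intends: the paper declares the lemma ``immediate'' after the remark on pairwise pedal intersections, and your argument --- identifying $\ell\cap U_{\alpha,\beta}$ with the pedal contained in $\ell$ via the $1$-or-$(q+1)$ intersection property, then invoking that two distinct pedals share at most one point --- is precisely the intended justification. The one step you rightly flag as needing care, namely $\tau_{P}(U_{\alpha,\beta})=\ell\cap U_{\alpha,\beta}$, is handled correctly.
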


The generalization of this lemma to lines intersecting pedals of points not on $\ell_{\infty}$ is not true (see Section \ref{secnotPinfty}). However, Lemma \ref{lemlinesthroughPinfinity} implies that the line $\ell\neq \ell_{\infty}$ can be partitioned into singletons, all of them in distinct pedals. We are able to prove that result for all other pedals as well. 

\begin{lemma}
Let $\ell$ be a line that is not tangent to $U_{\alpha,\beta}$. Then, there are $q+1$ distinct pedals intersecting $\ell$ in singletons, creating a partition of the points in  $\ell \cap U_{\alpha,\beta}$.
\end{lemma}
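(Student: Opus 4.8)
The plan is to exploit the basic correspondence between feet and tangent lines: since through each point of $U_{\alpha,\beta}$ there passes exactly one tangent line, a point $A\in U_{\alpha,\beta}$ is a foot of $P\notin U_{\alpha,\beta}$ (that is, $A\in\tau_{P}(U_{\alpha,\beta})$) precisely when $P$ lies on the unique tangent line to $U_{\alpha,\beta}$ at $A$. With this in hand the whole statement reduces to producing, for each point of $\ell\cap U_{\alpha,\beta}$, a single pedal that singles it out among the points of $\ell$.

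First I would note that a line which is not tangent to $U_{\alpha,\beta}$ must be secant, so $\ell\cap U_{\alpha,\beta}=\{A_{1},\dots,A_{q+1}\}$ has exactly $q+1$ points. For each $i$ let $t_{i}$ be the unique tangent line to $U_{\alpha,\beta}$ at $A_{i}$; these $q+1$ lines are pairwise distinct, since distinct tangency points force distinct tangents. The goal is, for each $i$, to choose a point $P_{i}\in t_{i}$ with $P_{i}\notin U_{\alpha,\beta}$ and $P_{i}\notin t_{j}$ for every $j\neq i$. By the correspondence above this forces $A_{i}\in\tau_{P_{i}}(U_{\alpha,\beta})$ and $A_{j}\notin\tau_{P_{i}}(U_{\alpha,\beta})$ for $j\neq i$; since $\tau_{P_{i}}(U_{\alpha,\beta})\subseteq U_{\alpha,\beta}$ and $\ell\cap U_{\alpha,\beta}=\{A_{1},\dots,A_{q+1}\}$, I obtain $\ell\cap\tau_{P_{i}}(U_{\alpha,\beta})=\{A_{i}\}$.

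The main, and essentially only, point requiring care is showing that such a $P_{i}$ exists, which is a counting argument on the line $t_{i}$. The line $t_{i}$ carries $q^{2}+1$ points; the only one lying in $U_{\alpha,\beta}$ is $A_{i}$, because $t_{i}$ is tangent, and the points forbidden by the conditions $P_{i}\notin t_{j}$ are the $q$ intersection points $t_{i}\cap t_{j}$ with $j\neq i$. Here one should check these are genuinely distinct from $A_{i}$: if $A_{i}\in t_{j}$, then $A_{i}\in t_{j}\cap U_{\alpha,\beta}=\{A_{j}\}$, a contradiction. Thus at most $1+q$ points of $t_{i}$ are excluded, leaving at least $q^{2}+1-(1+q)=q^{2}-q>0$ admissible choices for $P_{i}$, and any coincidences among the $t_{i}\cap t_{j}$ only increase this count.

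Finally I would conclude by observing that the pedals $\tau_{P_{1}}(U_{\alpha,\beta}),\dots,\tau_{P_{q+1}}(U_{\alpha,\beta})$ are pairwise distinct, because $\tau_{P_{i}}(U_{\alpha,\beta})$ meets $\ell$ exactly in $A_{i}$ and the $A_{i}$ are distinct, and that their intersections with $\ell$ are precisely the singletons $\{A_{1}\},\dots,\{A_{q+1}\}$, which partition $\ell\cap U_{\alpha,\beta}$. This produces the required $q+1$ pedals and completes the argument.
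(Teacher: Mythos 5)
Your proof is correct and follows essentially the same counting idea as the paper: for each point $A_i\in\ell\cap U_{\alpha,\beta}$, among the $q^2$ pedals containing $A_i$ (equivalently, the $q^2$ points of the tangent $t_i$ other than $A_i$), at most $q$ can contain another point of $\ell\cap U_{\alpha,\beta}$, so a pedal meeting $\ell$ only in $A_i$ exists. Your version is slightly cleaner in that it treats the case $P_\infty\in\ell$ uniformly rather than via a separate lemma, and your exclusion count $q^2-q$ is a little sharper than the paper's $q^2-(q+1)$, but the underlying argument is the same.
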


\begin{proof}
The case when $P_{\infty} \in \ell$ follows immediately from Lemma \ref{lemlinesthroughPinfinity}. 

Now, if $P_{\infty} \notin \ell$ then we use that 
every point in $U_{\alpha,\beta}$ is in $q^2$ pedals, and $\ell$ contains $q+1$ points of $U_{\alpha,\beta}$ then for each point in $\ell \cap U_{\alpha,\beta}$ there are at least $q^2-(q+1)$ pedals containing no other point of $\ell \cap U_{\alpha,\beta}$. Hence, using that $q\geq 3$ implies that $q^2-(q+1)\geq q+1$, we can choose the pedals to create the desired partition.
\end{proof}

Now our interest shifts to learn about the intersections of lines, not through $P_{\infty}$, with pedals of points not on $\ell_{\infty}$.

\section{Lines Not Containing $P_{\infty}$}\label{secnotPinfty}

In this section we will study lines that do not go through $P_{\infty}$. We consider the orthogonal-Buekenhout-Metz unital in $\Pi$
\[
U_{\alpha,\beta}=\{[x,\alpha x^{2}+\beta N(x)+r,1]; \ x\in GF(q^{2}), \  r\in GF(q)\}\cup\{P_{\infty}\}.
\]

The tangent line to $U_{\alpha,\beta}$ through $[x,\alpha x^2+\beta N(x)+r,1]$ is
\[
[-2\alpha x+(\overline{\beta}-\beta)\overline{x},1,\alpha x^{2}-\overline{\beta}N(x)-r]^{t}.
\]

In order to study $\tau_{P_{\lambda}}(U_{\alpha,\beta})$ we need to determine all $x\in GF(q^{2})$ and $r\in GF(q)$ such that 
\[
[0,\lambda \epsilon,1]\in[-2\alpha x+(\overline{\beta}-\beta)\overline{x},1,\alpha x^{2}-\overline{\beta}N(x)-r]^{t}
\]
which means
\begin{align*}
[0,\lambda \epsilon,1]\left[\begin{array}{c}
-2\alpha x+(\overline{\beta}-\beta)\overline{x}\\
1\\
\alpha x^{2}-\overline{\beta}N(x)-r
\end{array}\right]
=0  & \Longleftrightarrow \ \lambda \epsilon+\alpha x^{2}-\overline{\beta}N(x)-r=0\\
&\Longleftrightarrow \ r=\lambda \epsilon +\alpha x^{2}-\overline{\beta}N(x).
\end{align*}

Since $r\in GF(q)$ we get $\overline{r}=r$. Hence,
\[
\overline{\lambda \epsilon +\alpha x^{2}-\overline{\beta}N(x)} \ =\lambda \epsilon +\alpha x^{2}-\overline{\beta}N(x)
\]
and thus
\begin{equation}\label{eqtogettrace}
2\lambda \epsilon+\alpha x^{2}-\overline{\alpha}\;\overline{x}^{2}+(\beta-\overline{\beta})N(x)=0.
\end{equation}

We let 
\[
M_{\alpha,\beta}=\left[\begin{array}{cc}
\alpha & \frac{1}{2}(\beta-\overline{\beta})\\
\frac{1}{2}(\beta-\overline{\beta}) & -\overline{\alpha}\\
\end{array}\right], 
\]
and notice that
\[
2\lambda \epsilon+\alpha x^{2}-\overline{\alpha} \ \overline{x}^{2}+(\beta-\overline{\beta})N(x)=0 \ \Longleftrightarrow  \ 2\lambda\epsilon+\left[\begin{array}{cc}
x & \overline{x}\\
\end{array}\right]M_{\alpha,\beta}\left[\begin{array}{c}
x\\
\overline{x}\\
\end{array}\right]=0.
\]

Hence,  $\tau_{P_{\lambda}}(U_{\alpha,\beta})$ is the set of all the points of the form 
\[
[x,2\alpha x^{2}+(\beta-\overline{\beta})N(x)+\lambda \epsilon,1], 
\]
where  $x\in GF(q^{2})$, and 
\[
2\lambda \epsilon+\left[\begin{array}{cc}
x & \overline{x}\\
\end{array}\right]M_{\alpha,\beta}\left[\begin{array}{c}
x\\
\overline{x}\\
\end{array}\right]=0.
\]

We can now use Equation (\ref{eqtogettrace}) to find a different way to represent points in $\tau_{P_{\lambda}}(U_{\alpha,\beta})$. Notice that
\begin{align*}
T(\alpha x^2) -\lambda \epsilon & = (\alpha x^2) + \overline{(\alpha x^2)}-\lambda \epsilon \\
& = \alpha x^2 + \overline{\alpha} \ \overline{x}^2-\lambda \epsilon \\
& = \alpha x^2 +  (    2\lambda \epsilon+\alpha x^{2}+(\beta-\overline{\beta})N(x)    ) -\lambda \epsilon \\ 
& = 2 \alpha x^2     +    \lambda \epsilon   +    (\beta-\overline{\beta})N(x).
\end{align*}

Hence, letting $Q_x=[x,T(\alpha x^2) -\lambda \epsilon,1]$ we get
\[
\tau_{P_{\lambda}}(U_{\alpha,\beta}) = \left\{Q_x; \ x\in GF(q^{2}), \ 2\lambda \epsilon+\left[\begin{array}{cc}
x & \overline{x}\\
\end{array}\right]M_{\alpha,\beta}\left[\begin{array}{c}
x\\
\overline{x}\\
\end{array}\right]=0\right\}.
\]

\begin{remark}\label{remimaginarynorm}
Equation (\ref{eqtogettrace}) can be re-written as
\[
2\lambda \epsilon+  2 \epsilon Im(\alpha x^{2})+(\beta-\overline{\beta})N(x)=0
\]

It follows that for two points $Q_x, Q_y \in \tau_{P_{\lambda}}(U_{\alpha,\beta})$ we get $N(x) =N(y)$ if and only if $Im(\alpha x^{2})= Im(\alpha y^{2})$.
\end{remark}

We now introduce some notation. Let 
\begin{align*}
T_{\lambda} & = \left\{ x\in GF(q^{2}); \ Q_x \in \tau_{P_{\lambda}}(U_{\alpha,\beta}) \right\} \\
& = \left\{x\in GF(q^{2}); \ 2\lambda \epsilon+\left[\begin{array}{cc}
x & \overline{x}\\
\end{array}\right]M_{\alpha,\beta}\left[\begin{array}{c}
x\\
\overline{x}\\
\end{array}\right]=0\right\}.
\end{align*}

It is easy to see that $x\in T_{\lambda}$ if and only if $-x\in T_{\lambda}$. Moreover, if $x, -x\in T_{\lambda}$ then they have the same value of $r$ associated to them (in the representation of $Q_x$ and $Q_{-x}$ as points in $U_{\alpha,\beta}$).

\begin{lemma}\label{lemmxand-xandythen-y}
Let $l_{x,-x}$ be the line through $Q_{x}$ and $Q_{-x}$, where $\pm x\in T_{\lambda}$.  If $Q_{y} \in l_{x,-x}$, for some $y\in T_{\lambda}$, then $Q_{-y} \in l_{x,-x}$.
\end{lemma}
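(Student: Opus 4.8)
The plan is to compute the line $l_{x,-x}$ directly and to observe that it is a line of constant second coordinate; membership of an arbitrary foot $Q_z$ in such a line then depends only on the second coordinate of $Q_z$, which is unchanged when $z$ is replaced by $-z$. The statement will follow immediately from this observation.

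First I would note that the second coordinate of $Q_z=[z,T(\alpha z^2)-\lambda\epsilon,1]$ depends on $z$ only through $z^2$, since $T(\alpha z^2)=\alpha z^2+\overline{\alpha}\,\overline{z}^2$ and $\lambda\epsilon$ is fixed. In particular $Q_x$ and $Q_{-x}$ have the same second coordinate, say $s=T(\alpha x^2)-\lambda\epsilon$, and the same third coordinate $1$, while their first coordinates $x$ and $-x$ are distinct: indeed the defining equation of $T_{\lambda}$ evaluated at $0$ reads $2\lambda\epsilon=0$, which is impossible, so $0\notin T_{\lambda}$ and hence $x\neq 0$ (recall $p$ is odd). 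Writing $l_{x,-x}=[a,b,c]^t$ and imposing the two incidence equations $xa+sb+c=0$ and $-xa+sb+c=0$ forces $a=0$ and $c=-sb$, so that $l_{x,-x}=[0,1,-s]^t$.

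With this normal form, a point $[z,q,1]$ lies on $l_{x,-x}$ if and only if $q=s$. Applying this to $Q_y$ (which is a genuine foot, since $y\in T_{\lambda}$), the hypothesis $Q_y\in l_{x,-x}$ says exactly that the second coordinate $T(\alpha y^2)-\lambda\epsilon$ of $Q_y$ equals $s$. Because $T(\alpha(-y)^2)=T(\alpha y^2)$ and $-y\in T_{\lambda}$, the foot $Q_{-y}$ has the very same second coordinate as $Q_y$, hence also satisfies the condition $q=s$; therefore $Q_{-y}\in l_{x,-x}$, as claimed.

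The argument has no real obstacle: the only point deserving care is verifying that $l_{x,-x}$ genuinely takes the form $[0,1,-s]^t$, equivalently that it is the join of $Q_x$, $Q_{-x}$ and the infinite point $[1,0,0]$. I would also note in passing that $[0,1,-s]^t$ does not contain $P_{\infty}=[0,1,0]$, consistent with the focus of this section on lines avoiding $P_{\infty}$.
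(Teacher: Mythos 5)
Your proof is correct and follows essentially the same route as the paper: both compute $l_{x,-x}$ explicitly as the line $[0,1,-s]^t$ with $s=T(\alpha x^2)-\lambda\epsilon$ (the paper writes it as $[0,-1,s]^t$) and then observe that the incidence condition depends on $z$ only through $T(\alpha z^2)$, which is invariant under $z\mapsto -z$. Your extra check that $0\notin T_{\lambda}$, so $Q_x\neq Q_{-x}$ and the line is well defined, is a small point the paper leaves implicit.
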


\begin{proof}
The line passing through $Q_{x}$ and $Q_{-x}$ is given by
\[
l_{x,-x}=\left[\begin{array}{c}
0\\
-1\\
T(\alpha x^2) -\lambda \epsilon
\end{array}\right].
\]

If $Q_y \in l_{x,-x}$ then
\[
[y,T(\alpha y^2) -\lambda \epsilon,1]
\left[\begin{array}{c}
0\\
-1\\
T(\alpha x^2) -\lambda \epsilon
\end{array}\right] = 0,
\]
which can be simplified to
\[
T(\alpha x^2) -   T(\alpha y^2)  =0.
\]

On the other hand,
\[
[- y,T(\alpha (-y)^2) -\lambda \epsilon,1]
\left[\begin{array}{c}
0\\
-1\\
T(\alpha x^2) -\lambda \epsilon
\end{array}\right] = T(\alpha x^2) -   T(\alpha y^2),
\]
which is equal to zero. Hence, $Q_{-y} \in l_{x,-x}$.
\end{proof}

\begin{remark}\label{remlinesthrough100}
All lines of the form $l_{x,-x}$ pass through $[1,0,0]$.
\end{remark}

We want to learn about the conditions under which the line through $Q_{x}$ and $Q_{y}$, for $x, y \in T_{\lambda}$,  contains more points of $U_{\alpha,\beta}$ besides $Q_x$ and $Q_y$. 

\begin{lemma}\label{lemmsametrace}
Let $l_{x,y}$ be the line through $Q_{x}$ and $Q_{y}$, for $x\neq y$  in $T_{\lambda}$.  If $Q_z \in l_{x,y}$, $z \in T_{\lambda}$, and $T(\alpha x^2) = T(\alpha y^2)$, then $Q_{-z} \in l_{x,y}$.
\end{lemma}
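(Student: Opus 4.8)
The plan is to first determine the line $l_{x,y}$ explicitly and observe that the hypothesis $T(\alpha x^2)=T(\alpha y^2)$ forces it into exactly the shape of the lines studied in Lemma \ref{lemmxand-xandythen-y}. Writing $Q_x=[x,T(\alpha x^2)-\lambda \epsilon,1]$ and $Q_y=[y,T(\alpha y^2)-\lambda \epsilon,1]$, the hypothesis says that these two points share the same second coordinate. Since $x\neq y$, the defining vector $[a,b,c]^t$ of $l_{x,y}$ must satisfy $a(x-y)=0$, hence $a=0$, and I would conclude that
\[
l_{x,y}=\left[\begin{array}{c}
0\\
-1\\
T(\alpha x^2)-\lambda \epsilon
\end{array}\right],
\]
which is precisely the form appearing in the proof of Lemma \ref{lemmxand-xandythen-y}.

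Next I would impose $Q_z\in l_{x,y}$ and simplify. Because the first entry of this line vector is zero, the $z$-coordinate of $Q_z$ drops out entirely, and the incidence condition collapses to $T(\alpha x^2)-T(\alpha z^2)=0$. The crucial point is that $Q_{-z}=[-z,T(\alpha z^2)-\lambda \epsilon,1]$ has the same second coordinate as $Q_z$, since $(-z)^2=z^2$; so substituting $Q_{-z}$ into the same line vector produces the identical expression $T(\alpha x^2)-T(\alpha z^2)$, which vanishes by the previous step. Hence $Q_{-z}\in l_{x,y}$, as desired.

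The thing to get right here, rather than a genuine obstacle, is the reduction carried out in the first paragraph: one must recognize that the equal-trace hypothesis is exactly what makes $Q_x$ and $Q_y$ lie at the same height, so that $l_{x,y}$ has the same special form as $l_{x,-x}$ and the argument of Lemma \ref{lemmxand-xandythen-y} transfers almost verbatim. In effect this lemma is a direct generalization of that one, the case $y=-x$ being automatic since $T(\alpha x^2)=T(\alpha(-x)^2)$. In parallel with Remark \ref{remlinesthrough100}, I would also note that every such line passes through $[1,0,0]$, as $[1,0,0]\,[0,-1,T(\alpha x^2)-\lambda \epsilon]^t=0$.
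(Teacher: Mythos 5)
Your proposal is correct and follows essentially the same route as the paper: both reduce $l_{x,y}$ to the form $[0,-1,T(\alpha x^2)-\lambda\epsilon]^t$ using the equal-trace hypothesis, derive $T(\alpha z^2)=T(\alpha x^2)$ from $Q_z\in l_{x,y}$, and conclude that $Q_{-z}$ lies on the same line. The only cosmetic difference is that you verify $Q_{-z}\in l_{x,y}$ by direct substitution, whereas the paper identifies the line as $l_{z,-z}$ and appeals to Lemma \ref{lemmxand-xandythen-y}.
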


\begin{proof}
The line  $l_{x,y}$ is given by:
\[
l_{x,y}=\left[\begin{array}{c}
T(\alpha x^{2}) - T(\alpha y^{2}) \\
y-x\\
xT(\alpha y^2) - yT(\alpha x^2)+(y-x)\lambda \epsilon
\end{array}\right].
\]

But, $T(\alpha x^2) = T(\alpha y^2)$ and $x\neq y$, so $l_{x,y}$ is represented by
\[
l_{x,y}=\left[\begin{array}{c}
0 \\
-1 \\
T(\alpha x^2) -\lambda \epsilon
\end{array}\right].
\]

If $Q_z \in l_{x,y}$ then, after routine simplifications, we get
\[
- T(\alpha z^2)  + T(\alpha x^2) =0.
\]

It follows that $l_{x,y}$ can be represented by
\[
l_{x,y}=\left[\begin{array}{c}
0 \\
-1 \\
T(\alpha z^2) -\lambda \epsilon
\end{array}\right],
\]
which is the line $l_{z,-z}$.
\end{proof}

\begin{theorem}\label{themonlytwo}
Let $U_{\alpha,\beta}$ be an orthogonal-Buekenhout-Metz unital with $\alpha\neq0$. Let $Q_x$ and $Q_y$ be two distinct points in $\tau_{P_{\lambda}}(U_{\alpha,\beta})$, and let $l_{x,y}$ be the line through them. If $T(\alpha x^{2})\neq T(\alpha y^{2})$ then
\[
l_{x,y}\cap \tau_{P_{\lambda}}(U_{\alpha,\beta})=\{Q_x, Q_y\}.
\]
\end{theorem}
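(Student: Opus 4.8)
The plan is to parametrize any third foot lying on $l_{x,y}$ and to collapse the collinearity condition into a quadratic equation in a single $GF(q)$-valued unknown. Writing $s=T(\alpha x^2)$, $t=T(\alpha y^2)$ and $u=T(\alpha z^2)$, I would use the explicit form of $l_{x,y}$ recorded in the proof of Lemma~\ref{lemmsametrace}; substituting $Q_z=[z,T(\alpha z^2)-\lambda\epsilon,1]$ makes the $\lambda\epsilon$ terms cancel and leaves the single relation
\[
z(s-t)+u(y-x)+xt-ys=0.
\]
Since $s\neq t$ by hypothesis, I can solve this for $z$, obtaining $z=Pu+R$ with $P=(x-y)/(s-t)\neq 0$ and $R=(ys-xt)/(s-t)$. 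Thus every candidate foot on $l_{x,y}$ lies on the $GF(q)$-affine line $\{Pu+R : u\in GF(q)\}$ and depends affinely on the single real parameter $u$; a direct check shows $u=s$ returns $z=x$ and $u=t$ returns $z=y$.

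Next I would turn the two remaining constraints on $z$ into quadratics in $u$. Substituting $z=Pu+R$ into the defining identity $u=T(\alpha z^2)=\alpha z^2+\overline{\alpha}\,\overline{z}^{\,2}$ (recall $s,t,u\in GF(q)$) gives
\[
T(\alpha P^2)\,u^2+\bigl(2T(\alpha PR)-1\bigr)\,u+T(\alpha R^2)=0,
\]
with all coefficients in $GF(q)$. Independently, substituting $z=Pu+R$ into the membership condition~(\ref{eqtogettrace}) for $z\in T_\lambda$ produces a second quadratic in $u$ whose leading coefficient is a nonzero multiple of $\mathrm{Im}(\alpha P^2)+\mathrm{Im}(\beta)N(P)$. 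Both quadratics are satisfied at the two distinct values $u=s$ and $u=t$; hence if either has a nonzero leading coefficient it has at most these two roots, forcing $u\in\{s,t\}$, and therefore $z\in\{x,y\}$, which is exactly the statement $l_{x,y}\cap\tau_{P_\lambda}(U_{\alpha,\beta})=\{Q_x,Q_y\}$.

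The crux, and the step I expect to be the main obstacle, is ruling out the degenerate situation in which both leading coefficients vanish at once (if either survives, the argument above closes immediately). I would argue by contradiction. The vanishing $T(\alpha P^2)=0$ forces $\alpha P^2$ to be purely imaginary and nonzero, so that $N(\alpha)N(P)^2=N(\alpha P^2)=-w\,\mathrm{Im}(\alpha P^2)^2$; feeding in the vanishing of the second leading coefficient, namely $\mathrm{Im}(\alpha P^2)=-\mathrm{Im}(\beta)N(P)$, and dividing by $N(P)^2\neq 0$ yields $N(\alpha)+w\,\mathrm{Im}(\beta)^2=0$. But $4N(\alpha)+(\overline{\beta}-\beta)^2=4\bigl(N(\alpha)+w\,\mathrm{Im}(\beta)^2\bigr)$, and this is precisely the quantity required to be a non-square, hence nonzero, in the very definition of $U_{\alpha,\beta}$. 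This contradiction shows the two leading coefficients cannot both vanish, so at least one of the quadratics genuinely restricts $u$ to $\{s,t\}$, completing the proof.
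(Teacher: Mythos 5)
Your proof is correct, but it follows a genuinely different route from the paper's. The paper writes a hypothetical third foot as $Q_z = Q_x + \mu Q_y$, uses the trace relation together with $T(\alpha x^2)\neq T(\alpha y^2)$ to force $\mu\in GF(q)$, and then observes that substituting into the membership condition (\ref{eqtogettrace}) makes the $\mu^0$- and $\mu^2$-coefficients vanish automatically because $x,y\in T_{\lambda}$, so the cross-term must vanish too; but then \emph{every} $\mu\in GF(q)\setminus\{-1\}$ would yield a foot, producing $q$ collinear feet on a line of the form $l_{y,-y}$, contradicting the evenness forced by Lemma \ref{lemmxand-xandythen-y} since $q$ is odd. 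You instead parametrize the candidate foot by its trace value $u=T(\alpha z^2)\in GF(q)$ via $z=Pu+R$, extract two quadratics over $GF(q)$ that both vanish at the two distinct values $u=s$ and $u=t$, and rule out simultaneous degeneracy of their leading coefficients by reducing that situation to $N(\alpha)+w\,Im(\beta)^2=0$, i.e.\ $4N(\alpha)+(\overline{\beta}-\beta)^2=0$, which contradicts the non-square hypothesis in the very definition of $U_{\alpha,\beta}$. I verified the degenerate-case computation: $T(\alpha P^2)=0$ gives $N(\alpha)N(P)^2=-w\,Im(\alpha P^2)^2$, and combining with $Im(\alpha P^2)=-Im(\beta)N(P)$ does yield the claimed identity; note also that if only one leading coefficient vanishes, the corresponding polynomial would be linear or constant with two distinct roots, hence identically zero, so your case split is exhaustive. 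Your approach is self-contained, avoids the parity argument, and makes visible exactly where the non-degeneracy condition on $(\alpha,\beta)$ is used; the paper's approach, by contrast, is organized around the special lines $l_{x,-x}$ and feeds directly into the four-point bound of Theorem \ref{thmatmostfour}.
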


\begin{proof} 
Suppose that $l_{x,y}$ contains a point $Q_z \in \tau_{P_{\lambda}}(U_{\alpha,\beta})$, different from $Q_x$ and $Q_y$, then there is a $\mu\in GF(q^{2})\setminus \{0\}$ such that
\[
Q_z = Q_x+\mu Q_y.
\]

Note that $1+\mu\neq0$, otherwise $Q_x+\mu Q_y\notin U_{\alpha,\beta}$. Then,
\[
[z,T(\alpha z^{2})-\lambda\epsilon,1] = Q_x+\mu Q_y = \left[\frac{x+\mu y}{1+\mu}, \frac{T(\alpha x^{2}) +\mu T(\alpha y^{2})}{1+\mu} -\lambda\epsilon,1\right].
\]

This expression implies
\[
T\left(\alpha\left(\frac{x+\mu y}{1+\mu}\right)^{2}\right) = \frac{T(\alpha x^{2}) +\mu T(\alpha y^{2})}{1+\mu},
\]
which we re-write as:
\[
(1+\mu)T\left(\alpha\left(\frac{x+\mu y}{1+\mu}\right)^{2}\right) = T(\alpha x^{2}) +\mu T(\alpha y^{2}).
\]

It follows that
\begin{equation}\label{eqtraces}
T\left(\alpha\left(\frac{x+\mu y}{1+\mu}\right)^{2}-\alpha x^{2}\right)=\mu T\left(\alpha y^{2}-\alpha\left(\frac{x+\mu y}{1+\mu}\right)^{2}\right).
\end{equation}

If
\[
T\left(\alpha\left(\displaystyle\frac{x+\mu y}{1+\mu}\right)^{2}-\alpha x^{2}\right)=T\left(\alpha y^{2}-\alpha\left(\displaystyle\frac{x+\mu y}{1+\mu}\right)^{2}\right)=0
\]
then
\[
T\left(\alpha\left(\frac{x+\mu y}{1+\mu}\right)^{2}\right)=T(\alpha x^{2}) \ \ \   \text{and} \ \ \   T(\alpha y^{2})=T\left(\alpha\left(\frac{x+\mu y}{1+\mu}\right)^{2}\right)
\]
and thus $T(\alpha x^{2})=T(\alpha y^{2})$, which contradicts our hypothesis. It follows that Equation (\ref{eqtraces}) implies $\mu\in GF(q)$, and thus $1+\mu \in GF(q)$.

Since $z \in T_{\lambda}$ and $\displaystyle{z = \frac{x+\mu y}{1+\mu}}$ we get
\[
2\lambda \epsilon+\alpha \left(\displaystyle{\frac{x+\mu y}{1+\mu}}\right)^{2}-\overline{\alpha}\overline{\left(\displaystyle{\frac{x+\mu y}{1+\mu}}\right)}^{2}+(\beta-\overline{\beta}) \left(\displaystyle{\frac{x+\mu y}{1+\mu}}\right)^{q+1} = 0,
\]
which is equivalent to
\[
2\lambda\epsilon(1+\mu)^{2}+\alpha(x+\mu y)^{2}-\overline{\alpha}\overline{(x+\mu y)}^{2}+(\beta-\overline{\beta})(x+\mu y)^{q+1}=0.
\]

After some simplifications we get
\begin{align}\label{eqnalmostthere}
\begin{split}
\left(2\lambda\epsilon +  \alpha x^2  -  \overline{\alpha} \ \overline{x}^2   +  (\beta-\overline{\beta})N(x)\right) \\
+ \mu^2 \left(2\lambda\epsilon + \alpha y^2 -   \overline{\alpha}\  \overline{y}^{2}+ (\beta-\overline{\beta}) N(y)\right)     \\
  + \mu \left( 4\lambda\epsilon   + 2 \alpha  xy  -  2 \overline{\alpha} \ \overline{x} \ \overline{y} + (\beta-\overline{\beta})(\overline{y}x + \overline{x}y) \right)  & =  0.
\end{split}
\end{align}

Since $x, y  \in T_{\lambda}$, we know
\[          
2\lambda \epsilon+\alpha x^{2}-\overline{\alpha}\;\overline{x}^{2}+(\beta-\overline{\beta})N(x) = 
2\lambda \epsilon+\alpha y^{2}-\overline{\alpha}\;\overline{y}^{2}+(\beta-\overline{\beta})N(y) = 0
\]
and thus, Equation (\ref{eqnalmostthere}) implies
\[
4\lambda\epsilon   + 2 \alpha  xy  -  2 \overline{\alpha} \ \overline{x} \ \overline{y} + (\beta-\overline{\beta})(\overline{y}x + \overline{x}y)   =0,
\]
as $\mu\neq0$.

Since the expression above is zero, independent of the value of $\mu$, every point  $Q_x+\mu Q_y$, for $\mu\in GF(q)\setminus \{-1\}$, is in $\tau_{P_{\lambda}}(U_{\alpha,\beta})$. Hence, there are exactly $q$ feet of $P_{\lambda}$ in $U_{\alpha,\beta}$ lying on the same line $\ell$. But since $x\in T_{\lambda}$ implies $- x\in T_{\lambda}$, there is a $y\in T_{\lambda}$ such that $\ell = l_{y,-y}$. However, by Lemma \ref{lemmxand-xandythen-y} the number of points on $\ell \cap \tau_{P_{\lambda}}(U_{\alpha,\beta})$ must be even, which contradicts $q$ being odd.
\end{proof}

As of now we have that a secant line cannot intersect $\tau_{P_{\lambda}}(U_{\alpha,\beta})$ in $3$ points, and that if the line is not of the form $l_{x,-x}$ then this intersection contains at most $2$ points. Next we obtain a bound for the maximum number of collinear points on  $\tau_{P_{\lambda}}(U_{\alpha,\beta})$.

\begin{theorem}\label{thmatmostfour}
Let $U_{\alpha,\beta}$ be an orthogonal-Buekenhout-Metz unital with $\alpha\neq0$ and let $\ell$ be the line through two distinct points in $\tau_{P_{\lambda}}(U_{\alpha,\beta})$. Then, $\ell$ intersects $\tau_{P_{\lambda}}(U_{\alpha,\beta})$ in at most four points.
\end{theorem}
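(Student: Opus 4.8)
The plan is to split into two cases according to whether the two given points share the same ``trace'' value $T(\alpha x^{2})$. Write $\ell=l_{x,y}$ with $Q_x,Q_y\in\tau_{P_{\lambda}}(U_{\alpha,\beta})$ and $x\neq y$ in $T_{\lambda}$. If $T(\alpha x^{2})\neq T(\alpha y^{2})$, then Theorem \ref{themonlytwo} applies directly and yields $\ell\cap\tau_{P_{\lambda}}(U_{\alpha,\beta})=\{Q_x,Q_y\}$, so there are exactly two points and we are done. Hence all of the content lies in the case $T(\alpha x^{2})=T(\alpha y^{2})=:c$. There, the computation from the proof of Lemma \ref{lemmsametrace} shows $\ell=[0,-1,c-\lambda\epsilon]^{t}$, and a point $Q_z=[z,T(\alpha z^{2})-\lambda\epsilon,1]$ lies on $\ell$ precisely when $T(\alpha z^{2})=c$. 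Since $z\mapsto Q_z$ is injective on first coordinates, bounding $|\ell\cap\tau_{P_{\lambda}}(U_{\alpha,\beta})|$ reduces to bounding the number of $z\in T_{\lambda}$ subject to the single extra constraint $T(\alpha z^{2})=c$.

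To count those $z$ I would write $\alpha z^{2}=\tfrac{c}{2}+\epsilon v$, so that the condition $T(\alpha z^{2})=c$ fixes the real part and leaves $v=Im(\alpha z^{2})$ free. Using the form of Equation (\ref{eqtogettrace}) recorded in Remark \ref{remimaginarynorm}, membership $z\in T_{\lambda}$ becomes the linear relation $\lambda+v+b_{0}N(z)=0$, where $b_{0}=Im(\beta)$ and $\beta-\overline{\beta}=2\epsilon b_{0}$; thus $v$ is determined by $N(z)$. Taking norms in $\alpha z^{2}=\tfrac{c}{2}+\epsilon v$ gives $N(\alpha)N(z)^{2}=\tfrac{c^{2}}{4}-w v^{2}$, and substituting $v=-\lambda-b_{0}N(z)$ converts this into a single quadratic in the real unknown $t=N(z)$:
\[
\left(N(\alpha)+w b_{0}^{2}\right)t^{2}+2w\lambda b_{0}\,t+\left(w\lambda^{2}-\tfrac{c^{2}}{4}\right)=0.
\]

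The decisive point is that this quadratic is nondegenerate: its leading coefficient equals $N(\alpha)+wb_{0}^{2}=\tfrac{1}{4}\big(4N(\alpha)+(\overline{\beta}-\beta)^{2}\big)$, which is nonzero exactly because Buekenhout's hypothesis forces $4N(\alpha)+(\overline{\beta}-\beta)^{2}$ to be a non-square in $GF(q)$. Hence there are at most two admissible values of $t=N(z)$. Each such $t$ fixes $v$, hence fixes $\alpha z^{2}$ and therefore $z^{2}$, confining $z$ to the (at most two) square roots $\pm z_{0}$ of a fixed element of $GF(q^{2})$; as $\pm z_{0}$ share the same norm they are simultaneously admissible or not. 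Thus each value of $t$ contributes at most two values of $z$, for a total of at most $2\times 2=4$, and so $\ell$ meets $\tau_{P_{\lambda}}(U_{\alpha,\beta})$ in at most four points.

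The main obstacle I anticipate is the bookkeeping in this last step. One must confirm that the equation in $t$ genuinely has degree two --- this is precisely where the unital hypothesis is needed --- and one must check the norm-consistency in passing from $t=N(z)$ back to $z$, namely that extracting square roots of the fixed value $z^{2}$ cannot introduce more than two solutions or a solution with the wrong norm. Invoking the non-square condition to rule out a vanishing leading coefficient, together with the observation that $\pm z_{0}$ have equal norm, resolves both concerns and completes the bound.
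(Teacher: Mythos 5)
Your argument is correct, and while it shares the paper's initial reduction (dispose of the unequal-trace case via Theorem \ref{themonlytwo}, then observe that in the equal-trace case the line is $[0,-1,c-\lambda\epsilon]^{t}$ and membership of $Q_z$ amounts to the pair of conditions $T(\alpha z^{2})=c$ and $z\in T_{\lambda}$), it departs from the paper in the decisive counting step. The paper coordinatizes $z=z_{1}+z_{2}\epsilon$, turns the two conditions into two affine conics over $GF(q)$, and invokes B\'ezout's theorem to obtain the bound $2\cdot 2=4$, with a separate (and somewhat delicate) argument to exclude a common linear factor. You instead eliminate by hand: the trace condition fixes $Re(\alpha z^{2})=c/2$, membership in $T_{\lambda}$ makes $v=Im(\alpha z^{2})$ an affine function of $t=N(z)$, and taking norms yields a single quadratic in $t$ whose leading coefficient is $\tfrac{1}{4}\bigl(4N(\alpha)+(\overline{\beta}-\beta)^{2}\bigr)$, nonzero precisely by Buekenhout's non-square hypothesis; hence at most two values of $t$, each of which determines $\alpha z^{2}$, hence $z^{2}$, hence at most two values of $z$. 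Your route is more elementary (no appeal to B\'ezout over a non-closed field and no common-component case analysis), and it makes visible exactly where the hypothesis on $4N(\alpha)+(\overline{\beta}-\beta)^{2}$ enters --- something the paper's proof never uses explicitly; the paper's route is shorter to state and applies mechanically to any pair of quadratic conditions. The computations underlying your elimination (the identification $\beta-\overline{\beta}=2\epsilon\, Im(\beta)$, the identity $N(c/2+\epsilon v)=c^{2}/4-wv^{2}$, and the value of the leading coefficient) all check out, so both proofs are valid.
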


\begin{proof}
Because of the previous results, the only case to consider is when $\ell$ is of the form $l_{x,-x}$, for some $x\in T_{\lambda}$. Hence, the conditions for $Q_z \in \ell \cap \tau_{P_{\lambda}}(U_{\alpha,\beta})$ are
\begin{equation}\label{eqnewtowardsBezout}
T(\alpha x^2) -   T(\alpha z^2)  =0 \hspace{.3in} \text{and} \hspace{.3in} 2\lambda \epsilon+  2 \epsilon Im(\alpha z^{2})+(\beta-\overline{\beta})N(z)=0.
\end{equation}

We let $z=z_{1}+z_{2}\epsilon$, $\alpha=\alpha_{1}+\alpha_{2}\epsilon$, and $\beta=\beta_{1}+\beta_{2}\epsilon$, where $z_{1}$, $z_{2}$, $\alpha_{1}$, $\alpha_{2}$, $\beta_1$, $\beta_2 \in GF(q)$.  Using these variables we can re-write Equations (\ref{eqnewtowardsBezout}) as the system
\begin{align}\label{eqnpointontangentline4}
\begin{split}
2^{-1} T(\alpha x^2)  & = \alpha_{1} z_{1}^2 +  \alpha_{1}  w z_{2}^2  + 2\alpha_{2}w z_1z_2              \\
- \lambda & = (\alpha_{2} +  \beta_2) z_{1}^2  + (\alpha_{2} - w \beta_2)  z_2^2  + 2\alpha_{1}z_1z_{2}  
\end{split}
\end{align}

We define the following elements in $GF(q)$.
\[
\begin{array}{llll}
A = \alpha_1  & B =  \alpha_{1}w & C =  2\alpha_{2} w \ \ \ \ \ & D =  - 2^{-1} T(\alpha x^2)   \\
E  =  \alpha_{2} +  \beta_2 \ \ \ \ \ & F =  \alpha_{2} - w \beta_2 \  \ \ \ \ & G =  2\alpha_{1} & H =   \lambda.
\end{array}
\]

These elements allow us to re-write System (\ref{eqnpointontangentline4}) as the following system of equations with coefficients in $GF(q)$:
\begin{equation}\label{eqnpointontangentline5}
A z_{1}^2 + B z_{2}^2  + C z_1z_2  +   D  =0        \hspace{.4in} \text{and} \hspace{.4in}  E z_{1}^2  + F z_2^2  + G z_1z_{2}  + H =0
\end{equation}

If these equations have a common linear factor then we get three linear equations equal to zero, which is three intersecting lines. This yields one solution or a triplet of coinciding lines, which would imply that each equation in System (\ref{eqnpointontangentline5}) is a multiple of the other. But we know that the equation $T(\alpha x^2) -   T(\alpha z^2)  =0$ has exactly $2(q+1)$ solutions, implying that System (\ref{eqnpointontangentline5}) has $2(q+1)$ solutions, which is more than the maximum number of points on $\ell \cap \tau_{P_{\lambda}}(U_{\alpha,\beta})$, which is $q+1$.

In the case the equations in System (\ref{eqnpointontangentline5}) do not have common factors we can use B\'ezout's Theorem for the curves given by
\begin{align*}
p(z_{1},z_{2}) &= Az_1^2 +  B z_{2}^2 +Cz_1z_{2} + D \\
q(z_{1},z_{2}) &= Ez_1^2 + F z_{2}^2 +Gz_1z_{2} + H 
\end{align*}
and since both are polynomials in two variables with coefficients in $GF(q)$, and both have degree two, we get that System (\ref{eqnpointontangentline4}) has at most $4=deg(p)\cdot deg(q)$ solutions.
\end{proof}

We summarize our results on the size of the intersections between lines and pedals in the following theorem.

\begin{theorem}
Let $P\notin \ell_{\infty}$ and $\alpha \neq 0$. Then,
\begin{enumerate}
\item lines in $\Pi$ intersect $\tau_{P}(U_{\alpha,\beta})$ in exactly $0$, $1$, $2$, or $4$ points. 
\item the points of $\tau_{P}(U_{\alpha,\beta})$ may be partitioned into two arcs. 
\end{enumerate}
\end{theorem}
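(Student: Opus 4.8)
The plan is to obtain this theorem as a synthesis of the structural results already proved for the representatives $P_\lambda$. First I would reduce to $P=P_\lambda$: by the orbit description of $G$, the point $P$ lies in the $G$-orbit of $P_1$ or of $P_w$, and any $\sigma\in G$ is a collineation of $\Pi$ sending $\tau_P(U_{\alpha,\beta})$ to $\tau_{\sigma(P)}(U_{\alpha,\beta})$ (Lemma \ref{lemorbits}); since a collineation permutes the lines of $\Pi$ and preserves incidence, $|\ell\cap\tau_P(U_{\alpha,\beta})|$ is invariant under $G$. Thus it suffices to establish both statements for $\tau_{P_\lambda}(U_{\alpha,\beta})$.

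For part (1), the bound of at most four points is exactly Theorem \ref{thmatmostfour}, so the only possibility to exclude is an intersection of size three. Suppose a line $\ell$ met $\tau_{P_\lambda}(U_{\alpha,\beta})$ in at least three points. If two of them, say $Q_x$ and $Q_y$, had $T(\alpha x^2)\neq T(\alpha y^2)$, then Theorem \ref{themonlytwo} would give $\ell\cap\tau_{P_\lambda}(U_{\alpha,\beta})=\{Q_x,Q_y\}$, contradicting the third point. Hence all feet on $\ell$ share a common value of $T(\alpha z^2)$, so $\ell$ is of the form $l_{x,-x}$ (Lemma \ref{lemmsametrace}, Remark \ref{remlinesthrough100}). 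Then Lemma \ref{lemmxand-xandythen-y} shows $\ell\cap\tau_{P_\lambda}(U_{\alpha,\beta})$ is stable under $Q_z\mapsto Q_{-z}$, and since $0\notin T_\lambda$ (as $2\lambda\epsilon\neq 0$) this involution has no fixed foot; therefore $|\ell\cap\tau_{P_\lambda}(U_{\alpha,\beta})|$ is even. Together with Theorem \ref{thmatmostfour} this forces it to be four, never three, so the only intersection sizes are $0,1,2,4$.

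For part (2), I would exploit the fixed-point-free involution $\iota\colon Q_x\mapsto Q_{-x}$ on the $q+1$ feet, whose orbits are the pairs $\{Q_z,Q_{-z}\}$. The analysis in part (1) shows that the lines meeting $\tau_{P_\lambda}(U_{\alpha,\beta})$ in more than two points are precisely the four-point lines $l_{x,-x}$, each of which is $\iota$-invariant and hence, having four points, a union of exactly two $\iota$-orbits. I would then pick any transversal $A$ of $\iota$ (one foot from each orbit) and set $B=\tau_{P_\lambda}(U_{\alpha,\beta})\setminus A$, which is again a transversal. Every four-point line meets each of $A$ and $B$ in exactly one foot of each of its two orbits, so in exactly two points; every other line already meets the whole pedal in at most two points. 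Consequently no line meets $A$ or $B$ in three or more points, so $A$ and $B$ are arcs and $\{A,B\}$ is the required partition.

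The only step demanding genuine work beyond invoking earlier results is the parity argument in part (1): the entire scheme rests on identifying the lines that carry three or four feet with the $\iota$-invariant lines $l_{x,-x}$ and on the fact that $0\notin T_\lambda$, because it is exactly the fixed-point-freeness of $\iota$ that upgrades \emph{even intersection} to \emph{every heavy line is split evenly}. Once this is secured, part (2) costs almost nothing, since balancedness of each four-point line holds orbit-by-orbit and is therefore independent of how the transversal $A$ is chosen.
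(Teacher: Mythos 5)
Your proposal is correct and follows essentially the same route as the paper: reduce to $P_\lambda$ via Lemma \ref{lemorbits}, combine Theorems \ref{themonlytwo} and \ref{thmatmostfour} with the parity forced by the fixed-point-free involution $Q_x\mapsto Q_{-x}$ on lines of the form $l_{x,-x}$ to rule out three collinear feet, and then split the pedal so that each four-point line contributes two points to each part. The only (harmless) difference is the explicit partition in part (2): you take two transversals of the involution, while the paper keeps each pair $\{Q_x,Q_{-x}\}$ intact and separates the two pairs on each four-point line — both verifications are immediate, and the paper itself notes its choice is one of many.
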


\begin{proof}
The first part of the theorem follows from Theorems \ref{themonlytwo} and  \ref{thmatmostfour}, and Lemmas \ref{lemorbits}, \ref{lemmxand-xandythen-y} and \ref{lemmsametrace}.

For the second part, we use Lemma \ref{lemorbits} to allow ourselves to consider the particular case $P=P_{\lambda}$. Since we know that only the lines of the form $l_{x,-x}$ can intersect $\tau_{P_{\lambda}}(U_{\alpha,\beta})$ in four points, we look at these lines  first. 

Assume that the lines of the form $l_{x,-x}$ intersecting $\tau_{P_{\lambda}}(U_{\alpha,\beta})$ are partitioned as follows: $\ell_1, \ell_2, \ldots, \ell_n$ intersect $\tau_{P_{\lambda}}(U_{\alpha,\beta})$ in exactly two points and $\ell_{n+1}, \ell_{n+2}, \ldots, \ell_t$ intersect $\tau_{P_{\lambda}}(U_{\alpha,\beta})$ in four. We label the points of $\tau_{P_{\lambda}}(U_{\alpha,\beta})$ by $Q_{x}$, where $x$ is one of the following
\[
x_{11}, x_{12},\ldots, x_{n1}, x_{n2}, x_{(n+1)1}, x_{(n+1)2}, x_{(n+1)3}, x_{(n+1)4}, \ldots, x_{t1}, x_{t2}, x_{t3}, x_{t4}
\]
where $Q_{x_{ij}}\in \ell_i$, and $x_{i2}=-x_{i1}$ and $x_{i3}=-x_{i4}$, for all $i$.
Then, the points of $\tau_{P_{\lambda}}(U_{\alpha,\beta})$ can be partitioned into the following two arcs
\[
A_1 = \{Q_{x}; \ x= x_{ij}, \ i=1,\ldots,t \ \text{and} \ j=1,2 \} \cup \{P_{\lambda}\}
\]
and
\[
A_2  =   \{Q_{x};  \  x= x_{ij}, \  i=n+1,\ldots,t \ \text{and} \ j=3,4 \}.
\]

Note that the partition given is just one of the many possible ones.
\end{proof}

In the particular case when $\beta = \overline{\beta}$ we can get an even stronger result.

\begin{corollary}\label{corarcsbeta}
If $\alpha \neq 0$ and $\beta = \overline{\beta}$, then the points of $\tau_{P}(U_{\alpha,\beta})$ are contained in lines or arcs.
\end{corollary}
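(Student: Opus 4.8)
The plan is to reduce to the base case $P=P_\lambda$ using Lemma~\ref{lemorbits}, and then to argue separately according to whether $P$ lies on $\ell_\infty$ or not. If $P\in\ell_\infty$ (necessarily $P\neq P_\infty$, since $P_\infty\in U_{\alpha,\beta}$), then the preliminary result recalled in the introduction already guarantees that $\tau_{P}(U_{\alpha,\beta})$ is contained in a line, which supplies the ``line'' alternative. So the real content is to show that when $P\notin\ell_\infty$ the pedal is an arc, that is, that no line meets $\tau_{P_\lambda}(U_{\alpha,\beta})$ in three or more points.

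First I would record what the hypothesis $\beta=\overline\beta$ does to the computations of this section. Since $\beta-\overline\beta=0$, the matrix $M_{\alpha,\beta}$ becomes diagonal and the defining condition for $T_\lambda$ collapses, via Remark~\ref{remimaginarynorm}, to
\[
2\lambda\epsilon+2\epsilon\, Im(\alpha x^2)=0,
\qquad\text{that is}\qquad
Im(\alpha x^2)=-\lambda .
\]
Thus membership in $T_\lambda$ now says exactly that the imaginary part of $\alpha x^2$ equals the fixed constant $-\lambda$, independently of $N(x)$.

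The key step is to rule out four-point intersections. By Theorem~\ref{themonlytwo} any line $l_{x,y}$ with $T(\alpha x^2)\neq T(\alpha y^2)$ meets the pedal in exactly two points, and by Theorem~\ref{thmatmostfour} the only lines that could meet it in more than two points are those of the form $l_{x,-x}$; so it suffices to examine such a line. Using the expression for $l_{x,-x}$ from Lemma~\ref{lemmxand-xandythen-y}, a point $Q_z\in\tau_{P_\lambda}(U_{\alpha,\beta})$ lies on $l_{x,-x}$ precisely when $T(\alpha z^2)=T(\alpha x^2)$, i.e.\ when $\alpha z^2$ and $\alpha x^2$ have the same real part. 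But $z\in T_\lambda$ forces $Im(\alpha z^2)=-\lambda=Im(\alpha x^2)$ by the computation above, so the real and imaginary parts of $\alpha z^2$ and $\alpha x^2$ both agree. Hence $\alpha z^2=\alpha x^2$, and since $\alpha\neq 0$ this gives $z^2=x^2$, i.e.\ $z=\pm x$. Therefore $l_{x,-x}\cap\tau_{P_\lambda}(U_{\alpha,\beta})=\{Q_x,Q_{-x}\}$ has exactly two points.

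Combining the two previous paragraphs, every line of $\Pi$ meets $\tau_{P_\lambda}(U_{\alpha,\beta})$ in at most two points, so this set of $q+1$ points is an arc; transporting back by Lemma~\ref{lemorbits} yields the same conclusion for every $P\notin\ell_\infty$. The step I expect to require the most care is the collapse $z=\pm x$: it hinges on the fact that for $\beta=\overline\beta$ the condition cutting out $T_\lambda$ already pins down the imaginary part of $\alpha z^2$, so that the single linear trace condition coming from $l_{x,-x}$ determines $\alpha z^2$ completely. In the general case this is exactly what fails, which is why genuine four-point intersections appear there but not here.
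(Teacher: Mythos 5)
Your argument is correct, but it reaches the conclusion by a genuinely different route than the paper. The paper does not invoke Theorems~\ref{themonlytwo} and \ref{thmatmostfour} in this proof at all; it redoes the computation from scratch: with $\beta=\overline{\beta}$ each foot simplifies to $Q_z=[z,2\alpha z^{2}+\lambda\epsilon,1]$, the line $l_{x,y}$ through two arbitrary feet has coordinates $[-2\alpha(x+y),1,2\alpha xy-\lambda\epsilon]^{t}$, and the incidence condition for a third foot $Q_z$ factors as $(z-x)(z-y)=0$, which disposes of every secant of the pedal in one stroke. You instead first use Theorem~\ref{themonlytwo} (together with Lemma~\ref{lemmsametrace}, which is the precise source of the reduction to lines of the form $l_{x,-x}$) and then observe that $\beta=\overline{\beta}$ collapses the membership condition for $T_{\lambda}$, via Remark~\ref{remimaginarynorm}, to $Im(\alpha z^{2})=-\lambda$; the trace condition $T(\alpha z^{2})=T(\alpha x^{2})$ cut out by $l_{x,-x}$ then pins down $\alpha z^{2}$ completely and forces $z=\pm x$. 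Both arguments are complete and both handle the $P\in\ell_{\infty}$ case and the reduction to $P_{\lambda}$ identically. The paper's computation is shorter and self-contained, and yields the clean factorization $(z-x)(z-y)=0$; your version has the advantage of isolating exactly which degree of freedom disappears when $\beta=\overline{\beta}$ (the imaginary part of $\alpha z^{2}$ is no longer coupled to $N(z)$), which speaks directly to the paper's first open problem about when four-point secants can occur.
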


\begin{proof}
We already know that $\tau_{P}(U_{\alpha,\beta})$ is contained in a line when $P\in \ell_{\infty}$. For when $P\notin \ell_{\infty}$ we use Lemma \ref{lemorbits} to restrict ourselves to study the structure of $\tau_{P_{\lambda}}(U_{\alpha,\beta})$.

Let $x\neq y$ and let $l_{x,y}$ be the line through $Q_x, Q_y \in \tau_{P_{\lambda}}(U_{\alpha,\beta})$. Since $\beta=\overline{\beta}$ we get that $T(\alpha x^2) = 2 \alpha x^2 + 2 \lambda \epsilon$, for all $x\in T_{\lambda}$ (this follows from the argument before Remark \ref{remimaginarynorm}). Hence, a point $Q_z \in \tau_{P_{\lambda}}(U_{\alpha,\beta})$  now looks like $Q_z=[z,2\alpha z^{2}+\lambda \epsilon,1]$, and the line $l_{x,y}$ is given by
\[
\left[\begin{array}{c}
- 2\alpha(x+y)\\
1\\
2\alpha xy-\lambda \epsilon
\end{array}\right].
\]

Thus, $Q_z \in l_{x,y}$ if and only if 
\[
2\alpha[(x^{2}-y^{2})z-(x-y)z^{2}-xy(x-y)]=0.
\]

Since $\alpha\neq0$ and $x\neq y$, this equation reduces to 
\[
z^{2}-(x+y)z+xy=0,
\]
which can be re-written as
\[
(z-x)(z-y)=0.
\]

The result follows.
\end{proof}

\section{The Elation Group of $U_{\alpha,\beta}$}

Let us consider the collineation group of $U_{\alpha,\beta}$ given by 
\[
\mathcal{E}= \left\{ E_t: (x,y,z) \mapsto (x,y+tz,z); \ t\in GF(q) \right\}. 
\]

Note that $\mathcal{E}$ is an elation group with center $P_{\infty}$ and axis  $\ell_{\infty}$. It is easy to show that lines of the form $AE_t(A)$ must pass through $P_{\infty}$, for all $A\notin \ell_{\infty}$ and $E_t\in \mathcal{E}$. Also, since $\mathcal{E}$ acts semi-regularly on points not on its axis, nothing but the identity in $\mathcal{E}$ stabilizes a $\tau_{P}(U_{\alpha,\beta})$, and if $Q= E_t(P)$, for some $E_t \in \mathcal{E}$, we get that  $ \tau_{P}(U_{\alpha,\beta})$ and $\tau_{Q}(U_{\alpha,\beta})$ are disjoint. 

It has been mentioned before that every line through $P_{\infty}$, except from $\ell_{\infty}$, contains a pedal (of a point on  $\ell_{\infty}$). Moreover, it is easy to see that the $q$ points, different from  $P_{\infty}$, on each of these pedals form an orbit under the group $\mathcal{E}$. We take this observation as a `suggestion' to take a closer look at the orbits of pedals under $\mathcal{E}$ and to study how lines intersect these sets. 

From now on, we will use $\mathcal{O}(X)$ to denote the orbit of a set $X$ under the group $\mathcal{E}$. 

\begin{lemma}\label{lemsomelinesinq+1}
Given a pedal  $\tau_{P}(U_{\alpha,\beta})$, there is a point $Q\in \ell_{\infty}$ and $q$ lines through $Q$ that partition $\mathcal{O}(\tau_{P}(U_{\alpha,\beta}))$. That is, the intersection of each of these lines with $U_{\alpha,\beta}$ is completely contained in $\mathcal{O}(\tau_{P}(U_{\alpha,\beta}))$.
\end{lemma}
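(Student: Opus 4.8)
The plan is to reduce to the representative case $P=P_\lambda$ and then exhibit the point $Q$ and the $q$ lines explicitly. First I would invoke Lemma \ref{lemorbits} together with the fact that $\mathcal{E}\le G$ and that $G$ fixes both the center $P_\infty$ and the axis $\ell_\infty$ of the elations in $\mathcal{E}$ (so $G$ normalizes $\mathcal{E}$, and hence $\sigma\in G$ carries $\mathcal{O}(\tau_{P_\lambda}(U_{\alpha,\beta}))$ onto $\mathcal{O}(\tau_{\sigma(P_\lambda)}(U_{\alpha,\beta}))$). This lets me assume without loss of generality that $P=P_\lambda$, so that $\tau_{P_\lambda}(U_{\alpha,\beta})=\{Q_x;\ x\in T_\lambda\}$ with $Q_x=[x,T(\alpha x^2)-\lambda\epsilon,1]$ and $|T_\lambda|=q+1$.

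The key computational step is to describe $\mathcal{O}(\tau_{P_\lambda}(U_{\alpha,\beta}))$ cleanly. Applying $E_t$ to $Q_x$ gives $[x,T(\alpha x^2)-\lambda\epsilon+t,1]$; since $T(\alpha x^2)\in GF(q)$ and $t$ runs over all of $GF(q)$, the second coordinate runs over the whole coset $\{u-\lambda\epsilon;\ u\in GF(q)\}$. Hence $\mathcal{O}(\tau_{P_\lambda}(U_{\alpha,\beta}))$ consists of all points $[x,\alpha x^2+\beta N(x)+r,1]$ of $U_{\alpha,\beta}$ with $x\in T_\lambda$ and $r\in GF(q)$. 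I would then identify this with the set of points $[a,b,1]\in U_{\alpha,\beta}$ satisfying $Im(b)=-\lambda$: for a point of $U_{\alpha,\beta}$ we have $Im(b)=Im(\alpha a^2+\beta N(a))$ because $r\in GF(q)$, and rewriting the defining equation of $T_\lambda$ as in Remark \ref{remimaginarynorm} shows that $a\in T_\lambda$ if and only if $Im(\alpha a^2+\beta N(a))=-\lambda$. Thus $\mathcal{O}(\tau_{P_\lambda}(U_{\alpha,\beta}))=\{[a,b,1]\in U_{\alpha,\beta};\ Im(b)=-\lambda\}$.

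With this description the geometry becomes transparent. I take $Q=[1,0,0]\in\ell_\infty$, the common point of the lines $l_{x,-x}$ from Remark \ref{remlinesthrough100}; the lines through $Q$ other than $\ell_\infty$ are exactly the horizontal lines $[0,-1,b]^t$, each collecting the finite points at a fixed height $b$. I restrict attention to the $q$ heights $b=u-\lambda\epsilon$ with $u\in GF(q)$, that is, the heights with $Im(b)=-\lambda$. Since every point on such a line shares the same second coordinate $b$, and membership in $\mathcal{O}(\tau_{P_\lambda}(U_{\alpha,\beta}))$ depends only on $Im(b)$, each of these lines meets $U_{\alpha,\beta}$ entirely inside $\mathcal{O}(\tau_{P_\lambda}(U_{\alpha,\beta}))$; conversely every point of $\mathcal{O}(\tau_{P_\lambda}(U_{\alpha,\beta}))$ has $Im(b)=-\lambda$ and so lies on exactly one of them. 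Each such line is secant, its points of $U_{\alpha,\beta}$ being $\{[a,u-\lambda\epsilon,1];\ a\in T_\lambda\}$, a set of $|T_\lambda|=q+1$ points; this gives the desired partition into $q$ lines and matches the count $|\mathcal{O}(\tau_{P_\lambda}(U_{\alpha,\beta}))|=q(q+1)$ coming from the disjointness of the $q$ translates $E_t(\tau_{P_\lambda}(U_{\alpha,\beta}))$.

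I expect the main obstacle to be the bookkeeping that turns the orbit into the level set $\{Im(b)=-\lambda\}$: in particular, verifying that applying $\mathcal{E}$ sweeps each first coordinate $x\in T_\lambda$ through all admissible heights, and matching the $T_\lambda$ condition with the imaginary-part condition via Remark \ref{remimaginarynorm}. The one structural point needing care is the reduction itself, namely that $\mathcal{E}$ is normalized by $G$; this is immediate once one notes that $G$ fixes the center $P_\infty$ and the axis $\ell_\infty$ of the elations in $\mathcal{E}$.
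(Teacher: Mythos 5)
Your proof is correct and follows essentially the same route as the paper: both reduce to $P=P_\lambda$ via Lemma \ref{lemorbits}, take $Q=[1,0,0]$, and use the $q$ lines $[0,-1,s-\lambda\epsilon]^{t}$ with $s\in GF(q)$, whose points of $U_{\alpha,\beta}$ are exactly $\{[a,s-\lambda\epsilon,1];\ a\in T_\lambda\}$. Your repackaging of the orbit as the level set $\{[a,b,1]\in U_{\alpha,\beta};\ Im(b)=-\lambda\}$ is a slightly cleaner way to see the ``completely contained'' claim, but it is the same argument.
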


\begin{proof}
Because of Lemma \ref{lemorbits}, it is enough to look at how lines intersect $\tau_{P_{\lambda}}(U_{\alpha,\beta})$. We consider the point $[1,0,0]$ and the lines through it. We know that lines, different from $\ell_{\infty}$, through $[1,0,0]$ look like 
\[
l_{\gamma}=\left[\begin{array}{c}
0 \\
-1 \\
\gamma
\end{array}\right].
\]

It is easy to see that the orbit of $P_{\lambda}$ is contained on a line through $P_{\infty}$. So, we let $\mathcal{O}(P_{\lambda})=\{P_1, P_2, \ldots, P_q \}$, where $P_t = E_t(P_{\lambda})$, for all $t\in GF(q)$. Moreover, since 
\[
E_t(x,y,z)=(x,y+tz,z)
\]
for all $t\in GF(q)$, and  $P_{\lambda}=[0,\lambda \epsilon,1]$, with $\lambda=1$ or $\lambda=w$, we obtain
\[
P_t=[0,\lambda \epsilon+t,1]
\]

Using the arguments at the beginning of Section \ref{secnotPinfty}, we get that
\[
\tau_{P_{t}}(U_{\alpha,\beta}) = \left\{R_y; \ y\in GF(q^{2}), \ 2\lambda \epsilon+\left[\begin{array}{cc}
y & \overline{y}\\
\end{array}\right]M_{\alpha,\beta}\left[\begin{array}{c}
y\\
\overline{y}\\
\end{array}\right]=0\right\}
\]
where $R_y=[y,T(\alpha y^2) -\lambda \epsilon +t,1]$.

Now,  the points of intersection (if any) of $\tau_{P_{t}}(U_{\alpha,\beta})$ with $l_{\gamma}$ are given by
\[
0 = 
[y,T(\alpha y^2) -\lambda \epsilon +t,1]\left[\begin{array}{c}
0 \\
-1 \\
\gamma
\end{array}\right]  =  
-(T(\alpha y^2) -\lambda \epsilon +t)+ \gamma 
\]
which means 
\begin{equation}\label{eqalmostdone}
t= \gamma + \lambda \epsilon - T(\alpha y^2).
\end{equation} 

It follows that, if $y$ and $\gamma$ were given, and  $\gamma  = s - \lambda \epsilon$ for some $s \in GF(q)$, then we can always find a $t\in GF(q)$ that satisfies Equation (\ref{eqalmostdone}).  In this case, given a line $l_{s - \lambda \epsilon}$, for every $y\in GF(q^2)$ such that $R_y \in \tau_{P_{t}}(U_{\alpha,\beta})$ there is a point of intersection between $l_{s - \lambda \epsilon}$ and $\mathcal{O}(\tau_{P_{\lambda}}(U_{\alpha,\beta}))$.

Hence, for $\gamma \neq s - \lambda \epsilon$, for all $s\in GF(q)$ the intersection is empty and for when $\gamma = ks - \lambda \epsilon$ then the intersection contains $q+1$ points. Note that for every $s\in GF(q)$ we are able to choose such a $\gamma$, thus we get $q$ lines through $[1,0,0]$ intersecting $\mathcal{O}(\tau_{P_{\lambda}}(U_{\alpha,\beta}))$ in $q+1$ points each.
\end{proof}

We would like to close this paper stating a few open problems.
\begin{enumerate}
\item Do lines intersecting pedals in at least four points exist if and only if $\beta \neq \overline{\beta}$? Corollary \ref{corarcsbeta} gives us one direction of this conjecture.
\item What geometric properties determine when a line of the form $l_{x,-x}$ intersects a given $\tau_{P}(U_{\alpha,\beta})$ in four points? 
\item When a $\tau_{P}(U_{\alpha,\beta})$ is partitioned into two arcs (or contained in one arc for the case $\beta = \overline{\beta}$), is any of these arcs contained in a conic?
\item In how many points does a line intersect the set of points in the orbit of any given $\tau_{P}(U_{\alpha,\beta})$ under $\mathcal{E}$? Lemma \ref{lemsomelinesinq+1} gives us a partial answer to this, but there are several other lines that are not considered in this result.
\item Is there a combinatorial characterization for the structure formed by the lines of $\Pi$ and the points on $\mathcal{O}(\tau_{P_{\lambda}}(U_{\alpha,\beta}))$?
\end{enumerate}


\end{document}